\documentclass[11pt]{amsart}
\usepackage{epsfig,amsfonts,amsthm,amssymb,latexsym,amsmath}

\newcommand{\mymod}[3]{#1 \equiv #2 \kern -0.5em \pmod{#3}}
\newcommand{\mynotmod}[3]{#1 \not \equiv #2 \kern -0.6em \pmod{#3}}

\textwidth 13cm
\textheight 19.5cm
\oddsidemargin=0.7 true in
\evensidemargin=0.7 true in

\theoremstyle{plain}
\newtheorem{theorem}{Theorem}[section]
\newtheorem{corollary}[theorem]{Corollary}
\newtheorem{lemma}[theorem]{Lemma}

\theoremstyle{remark}

\theoremstyle{definition}
\newtheorem{definition}[theorem]{Definition}

\title{Third-order Jacobsthal $3$-Parameter Generalized Quaternions}
\vspace{5pt}


\author{\scriptsize GAMALIEL MORALES}
\date{}

\begin{document}
\maketitle

\vspace{-20pt}
\begin{center}
{\footnotesize Instituto de Matem\'aticas,\\
Pontificia Universidad Cat\'olica de Valpara\'iso,\\
Blanco Viel 596, Valpara\'iso, Chile.\\
E-mail: gamaliel.cerda.m@mail.pucv.cl
}\end{center}

\vspace{20pt}
\hrule

\begin{abstract}
The purpose of this article is to bring together the third-order Jacobsthal numbers and 3-parameter generalized quaternions, which are a general form of the quaternion algebra according to 3-parameters. With this purpose, we introduce and examine a new type of quite big special numbers system, which is called third-order Jacobsthal 3-parameter generalized quaternions (shortly, third-order Jacobsthal 3PGQs). Further, we compute both some new equations and classical well-known equations such as: linear recurrence, Binet formulas, generating function, sum formulas, Cassini identity and d'Ocagne identity.

\end{abstract}

\medskip
\noindent
\subjclass{\footnotesize {\bf Mathematical subject classification:} 
11B37, 11K31, 11R52, 11Y55.}

\medskip
\noindent
\keywords{\footnotesize {\bf Key words:} Third-order Jacobsthal number, 3-parameter generalized quaternion, recurrence sequence, fundamental matrix.}
\medskip

\hrule

\section{Introduction}
Third-order Jacobsthal numbers and modified third-order Jacobsthal numbers are well-known numbers among integer sequences. Third-order Jacobsthal numbers satisfy the third order recurrence relation
$$
J_{0}^{(3)}=0,\ J_{1}^{(3)}=J_{2}^{(3)}=1,\ J_{n}^{(3)}=J_{n-1}^{(3)}+J_{n-2}^{(3)}+2J_{n-3}^{(3)},\ n\geq 3.
$$
Similarly, modified third-order Jacobsthal numbers are defined the following recurrence relation
$$
K_{0}^{(3)}=0,\ K_{1}^{(3)}=1,\ K_{2}^{(3)}=3,\ K_{n}^{(3)}=K_{n-1}^{(3)}+K_{n-2}^{(3)}+2K_{n-3}^{(3)},\ n\geq 3.
$$

Generating functions for the sequences $\{J_{n}^{(3)}\}_{n\geq0}$ and $\{K_{n}^{(3)}\}_{n\geq0}$ are
$$
J(x)=\sum_{n=0}^{\infty}J_{n}^{(3)}x^{n}=\frac{x}{1-x-x^{2}-2x^{3}}
$$
and
$$
K(x)=\sum_{n=0}^{\infty}K_{n}^{(3)}x^{n}=\frac{3-2x-x^{2}}{1-x-x^{2}-2x^{3}},
$$
respectively. Binet formulae for third-order Jacobsthal numbers and modified third-order Jacobsthal numbers are
$$
J_{n}^{(3)}=\frac{1}{7}\left[2^{n+1}+X_{n}-2X_{n+1}\right]
$$
and
$$
K_{n}^{(3)}=2^{n}+X_{n}+2X_{n+1},
$$
respectively, where $\{X_{n}\}_{n\geq 0}$ is the cyclic sequence defined by
$$
X_{n}=\frac{\omega_{1}^{n}-\omega_{2}^{n}}{\omega_{1}-\omega_{2}}=\left\{ 
\begin{array}{ccc}
0 & \textrm{if}& \mymod{n}{0}{3} \\ 
1& \textrm{if} & \mymod{n}{1}{3} \\ 
-1& \textrm{if} & \mymod{n}{2}{3}
\end{array}
\right. 
$$
and $\omega_{1}=\omega_{2}^{2}=\frac{1}{2}(-1+i\sqrt{3})\in \mathbb{C}$ are the roots of the equation $x^{2}+x+1=0$. For more details, the reader may review \cite{Coo,Mor1}.

On the other hand, \c{S}ent\"{u}rk and \"{U}nal \cite{Sen} introduced a generalized quaternions algebra called 3-parameter generalized quaternions (shortly, 3PGQs). The set of 3PGQs is denoted by $\mathbb{H}_{\lambda_{1},\lambda_{2},\lambda_{3}}$ and defined as
$$
\{\psi_{0}+\sum_{l=1}^{3}\psi_{l}\textbf{e}_{l}:\ \textbf{e}_{1}^{2}=-\lambda_{1}\lambda_{2},\ \textbf{e}_{2}^{2}=-\lambda_{1}\lambda_{3},\ \textbf{e}_{3}^{2}=-\lambda_{2}\lambda_{3},\ \textbf{e}_{1}\textbf{e}_{2}\textbf{e}_{3}=-\lambda_{1}\lambda_{2}\lambda_{3}\},
$$
where $\lambda_{l\in\{1,2,3\}}\in \mathbb{R}$.

The vectors $\{\textbf{e}_{1},\textbf{e}_{2},\textbf{e}_{3}\}$ satisfy the multiplication rules
\begin{equation}\label{e1}
\textbf{e}_{1}\textbf{e}_{2}=-\textbf{e}_{2}\textbf{e}_{1}=\lambda_{1}\textbf{e}_{3},\ \ \textbf{e}_{1}\textbf{e}_{3}=-\textbf{e}_{3}\textbf{e}_{1}=-\lambda_{2}\textbf{e}_{2},\ \ \textbf{e}_{2}\textbf{e}_{3}=-\textbf{e}_{3}\textbf{e}_{2}=\lambda_{3}\textbf{e}_{1}.
\end{equation}

Table \ref{t1} includes some special cases of 3PGQs. One can observe that bringing together the various types of quaternions and special recurrence sequence components is quite an attractive concept for several researchers in the literature. In \cite{Hal}, Hal\i c\i\ and Karata\c{s} studied the generalized Fibonacci quaternions, Morales \cite{Mor2} introduced generalized Tribonacci quaternions and the third-order Jacobsthal generalized quaternions are presented in \cite{Mor3}, Akyi\u{g}it et al. \cite{Aky1,Aky2} examined the split Fibonacci quaternions and generalized Fibonacci quaternions, and Br\'od introduced the split Horadam quaternions. Recently, K\i z\i late\c{s} and Kibar \cite{Kiz} determined the 3PGQs with higher order generalized Fibonacci numbers components, and \.{I}\c{s}bilir and G\"urses \cite{Isb} introduced the 3PGQs with Horadam numbers components.

\begin{table}[!htb]
\begin{tabular}{cccr}
\hline
$\lambda_{1} $ &$\lambda_{2}$ & $\lambda_{3}$ & Type of 3PGQs \\ [0.5ex] 
\hline 
$1$ & $\lambda_{2}$ & $\lambda_{3}$ & Generalized quaternions \\
$1$ & $1$ & $-1$ & Split quaternions \\
$1$ & $1$ & $1$ & Hamilton quaternions \\
$1$ & $1$ & $0$ & Semi-quaternions \\
$1$ & $-1$ & $0$ & Split semi-quaternions \\
\hline
\end{tabular}
\label{t1} 
\caption{Some types of 3PGQs.}
\end{table}

In this study, we intend to combine the third-order Jacobsthal numbers and 3PGQ; namely, we investigate 3PGQ with third-order Jacobsthal numbers components. Also, we examine the recurrence relation, Binet formula, generating function, summing formulas, d'Ocagne identity, Cassini identity, and some special equations as well.

\section{Third-order Jacobsthal 3-parameter generalized quaternions}
We define third-order Jacobsthal 3-parameter (TJ3p) generalized quaternions and modified third-order Jacobsthal 3-parameter (MTJ3p) generalized quaternions in following.

\begin{definition}\label{d}
For $n\geq 0$, the $n$-th TJ3p and MTJ3p generalized quaternions are
$$
J\mathcal{G}_{n}^{(3)}=J_{n}^{(3)}+J_{n+1}^{(3)}\textbf{e}_{1}+J_{n+2}^{(3)}\textbf{e}_{2}+J_{n+3}^{(3)}\textbf{e}_{3}
$$
and
$$
K\mathcal{G}_{n}^{(3)}=K_{n}^{(3)}+K_{n+1}^{(3)}\textbf{e}_{1}+K_{n+2}^{(3)}\textbf{e}_{2}+K_{n+3}^{(3)}\textbf{e}_{3},
$$
where $J_{n}^{(3)}$ and $K_{n}^{(3)}$ are the classical third-order Jacobsthal and modified third-order Jacobsthal numbers, $\{\textbf{e}_{1},\textbf{e}_{2},\textbf{e}_{3}\}$ satisfy the multiplication rules in Eq. (\ref{e1}).
\end{definition}

For $n\geq 3$, TJ3p and MTJ3p generalized quaternions satisfy the following recurrence relations
\begin{equation}\label{r1}
J\mathcal{G}_{n}^{(3)}=J\mathcal{G}_{n-1}^{(3)}+J\mathcal{G}_{n-2}^{(3)}+2J\mathcal{G}_{n-3}^{(3)}
\end{equation}
and
\begin{equation}\label{r2}
K\mathcal{G}_{n}^{(3)}=K\mathcal{G}_{n-1}^{(3)}+K\mathcal{G}_{n-2}^{(3)}+2K\mathcal{G}_{n-3}^{(3)},
\end{equation}
respectively. 

The next theorem gives generating functions for the sequences $\{J\mathcal{G}_{n}^{(3)}\}_{n\geq 0}$ and $\{K\mathcal{G}_{n}^{(3)}\}_{n\geq 0}$.  Since the proofs are very straightforward and easy, we do not prefer to give them.

\begin{theorem}\label{te1}
Generating functions for the sequences $\{J\mathcal{G}_{n}^{(3)}\}_{n\geq 0}$ and $\{K\mathcal{G}_{n}^{(3)}\}_{n\geq 0}$ are
$$
\sum_{n=0}^{\infty}J\mathcal{G}_{n}^{(3)}x^{n}=\frac{1}{\sigma(x)}\left[\textnormal{\textbf{e}}_{1}+\textnormal{\textbf{e}}_{2}+2\textnormal{\textbf{e}}_{3}+(1+\textnormal{\textbf{e}}_{2}+3\textnormal{\textbf{e}}_{3})x+2(\textnormal{\textbf{e}}_{2}+\textnormal{\textbf{e}}_{3})x^{2}\right]
$$
and
$$
\sum_{n=0}^{\infty}K\mathcal{G}_{n}^{(3)}x^{n}=\frac{1}{\sigma(x)}\left\lbrace \begin{array}{c}
3+\textnormal{\textbf{e}}_{1}+3\textnormal{\textbf{e}}_{2}+10\textnormal{\textbf{e}}_{3}-(2-2\textnormal{\textbf{e}}_{1}-7\textnormal{\textbf{e}}_{2}-5\textnormal{\textbf{e}}_{3})x\\
-(1-6\textnormal{\textbf{e}}_{1}-2\textnormal{\textbf{e}}_{2}-6\textnormal{\textbf{e}}_{3})x^{2}
\end{array}\right\rbrace,
$$
where $\sigma(x)=1-x-x^{2}-2x^{3}$.
\end{theorem}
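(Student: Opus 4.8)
The plan is to use the standard generating-function technique for sequences governed by a linear recurrence, carried out verbatim as an identity of formal power series in the (noncommutative) algebra $\mathbb{H}_{\lambda_{1},\lambda_{2},\lambda_{3}}$. Since the multiplier $\sigma(x)=1-x-x^{2}-2x^{3}$ is scalar, it commutes with every quaternionic coefficient, so no difficulty arises from noncommutativity and the usual scalar manipulation is legitimate. First I would set $G(x)=\sum_{n=0}^{\infty}J\mathcal{G}_{n}^{(3)}x^{n}$ and form the product $\sigma(x)G(x)$ by expanding
\begin{align*}
\sigma(x)G(x) &= \sum_{n=0}^{\infty}J\mathcal{G}_{n}^{(3)}x^{n}-\sum_{n=0}^{\infty}J\mathcal{G}_{n}^{(3)}x^{n+1}\\
&\quad -\sum_{n=0}^{\infty}J\mathcal{G}_{n}^{(3)}x^{n+2}-2\sum_{n=0}^{\infty}J\mathcal{G}_{n}^{(3)}x^{n+3}.
\end{align*}

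Next I would reindex each sum so that all contributions land on the same power $x^{n}$ and collect coefficients. For every $n\geq 3$ the coefficient of $x^{n}$ equals $J\mathcal{G}_{n}^{(3)}-J\mathcal{G}_{n-1}^{(3)}-J\mathcal{G}_{n-2}^{(3)}-2J\mathcal{G}_{n-3}^{(3)}$, which vanishes by the recurrence (\ref{r1}). Hence $\sigma(x)G(x)$ collapses to a quaternion-valued polynomial of degree at most two, whose coefficients are exactly the boundary terms $J\mathcal{G}_{0}^{(3)}$, $J\mathcal{G}_{1}^{(3)}-J\mathcal{G}_{0}^{(3)}$, and $J\mathcal{G}_{2}^{(3)}-J\mathcal{G}_{1}^{(3)}-J\mathcal{G}_{0}^{(3)}$. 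To finish, I would evaluate these three quaternions from Definition \ref{d} using the initial values $J_{0}^{(3)}=0$, $J_{1}^{(3)}=J_{2}^{(3)}=1$, $J_{3}^{(3)}=2$, $J_{4}^{(3)}=5$, $J_{5}^{(3)}=9$; this yields the coefficients $\textbf{e}_{1}+\textbf{e}_{2}+2\textbf{e}_{3}$, then $1+\textbf{e}_{2}+3\textbf{e}_{3}$, then $2(\textbf{e}_{2}+\textbf{e}_{3})$, which is precisely the bracketed numerator after dividing by $\sigma(x)$.

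The argument for $\{K\mathcal{G}_{n}^{(3)}\}$ is identical, invoking (\ref{r2}) in place of (\ref{r1}) and the modified values $K_{1}^{(3)}=1$, $K_{2}^{(3)}=3$, $K_{3}^{(3)}=10$, $K_{4}^{(3)}=15$, $K_{5}^{(3)}=31$. There is essentially no genuine obstacle: the entire content is the cancellation of all coefficients of $x^{n}$ with $n\geq 3$ forced by the recurrence, and the only place demanding care is the arithmetic bookkeeping of the three initial quaternions. In particular I would use $K_{0}^{(3)}=3$ (the value consistent with the generating function $K(x)=\tfrac{3-2x-x^{2}}{1-x-x^{2}-2x^{3}}$ and with the Binet formula $K_{n}^{(3)}=2^{n}+X_{n}+2X_{n+1}$ recorded earlier), so that the scalar part of the constant term of the numerator is $3$; this is what produces $K\mathcal{G}_{0}^{(3)}=3+\textbf{e}_{1}+3\textbf{e}_{2}+10\textbf{e}_{3}$ and, via the same two differences, the coefficients $-(2-2\textbf{e}_{1}-7\textbf{e}_{2}-5\textbf{e}_{3})$ and $-(1-6\textbf{e}_{1}-2\textbf{e}_{2}-6\textbf{e}_{3})$ of $x$ and $x^{2}$.
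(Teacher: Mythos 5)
Your proof is correct and is precisely the standard argument the paper deliberately omits (it states only that the proofs are ``very straightforward and easy''): multiply $\sum_{n\geq 0}J\mathcal{G}_{n}^{(3)}x^{n}$ by the scalar polynomial $\sigma(x)$, which commutes with all quaternionic coefficients, cancel every coefficient of $x^{n}$ with $n\geq 3$ via the recurrences (\ref{r1}) and (\ref{r2}), and evaluate the three boundary quaternions $J\mathcal{G}_{0}^{(3)}$, $J\mathcal{G}_{1}^{(3)}-J\mathcal{G}_{0}^{(3)}$, $J\mathcal{G}_{2}^{(3)}-J\mathcal{G}_{1}^{(3)}-J\mathcal{G}_{0}^{(3)}$ (and likewise for $K\mathcal{G}$), all of which check out numerically. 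You also correctly identified and resolved the paper's internal inconsistency in the initial value: the stated $K_{0}^{(3)}=0$ is a typo, and your choice $K_{0}^{(3)}=3$ is the one consistent with the generating function $K(x)$, the Binet formula $K_{n}^{(3)}=2^{n}+X_{n}+2X_{n+1}$, and the constant term $3+\textbf{e}_{1}+3\textbf{e}_{2}+10\textbf{e}_{3}$ asserted in the theorem.
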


An interesting property of this type of quaternions is presented in the following theorem. To prove this result, we will use the relations that satisfy the third-order Jacobsthal and modified third-order Jacobsthal numbers.
\begin{theorem}\label{te2}
For any non-negative integer $n$, we have 
$$
J\mathcal{G}_{n+3}^{(3)}=J\mathcal{G}_{n}^{(3)}+2^{n+1}\Theta
$$
and
$$
K\mathcal{G}_{n+3}^{(3)}=K\mathcal{G}_{n}^{(3)}+7\cdot2^{n}\Theta
$$
where $\Theta=1+2\textnormal{\textbf{e}}_{1}+4\textnormal{\textbf{e}}_{2}+8\textnormal{\textbf{e}}_{3}$.
\end{theorem}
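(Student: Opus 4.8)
The plan is to reduce the quaternionic identity to a single scalar identity on the integer components. By Definition \ref{d}, $J\mathcal{G}_{n}^{(3)}$ is merely the real-linear combination $J_{n}^{(3)}+J_{n+1}^{(3)}\textbf{e}_{1}+J_{n+2}^{(3)}\textbf{e}_{2}+J_{n+3}^{(3)}\textbf{e}_{3}$, so the difference $J\mathcal{G}_{n+3}^{(3)}-J\mathcal{G}_{n}^{(3)}$ is computed coordinatewise, with no appeal to the multiplication rules (\ref{e1}); it equals
$$
(J_{n+3}^{(3)}-J_{n}^{(3)})+(J_{n+4}^{(3)}-J_{n+1}^{(3)})\textbf{e}_{1}+(J_{n+5}^{(3)}-J_{n+2}^{(3)})\textbf{e}_{2}+(J_{n+6}^{(3)}-J_{n+3}^{(3)})\textbf{e}_{3}.
$$
Thus everything hinges on the scalar claim that $J_{m+3}^{(3)}-J_{m}^{(3)}=2^{m+1}$ for every $m\geq 0$.

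I would establish this scalar step directly from the Binet formula $J_{m}^{(3)}=\frac{1}{7}[2^{m+1}+X_{m}-2X_{m+1}]$ together with the $3$-periodicity of $\{X_{n}\}$, which gives $X_{m+3}=X_{m}$ and $X_{m+4}=X_{m+1}$. Subtracting, the two cyclic contributions cancel exactly, leaving $J_{m+3}^{(3)}-J_{m}^{(3)}=\frac{1}{7}(2^{m+4}-2^{m+1})=\frac{1}{7}\cdot 7\cdot 2^{m+1}=2^{m+1}$. (One could instead prove the same identity by a short induction on the recurrence $J_{n}^{(3)}=J_{n-1}^{(3)}+J_{n-2}^{(3)}+2J_{n-3}^{(3)}$, but the Binet route is cleaner, since the periodicity makes the cancellation transparent.) Substituting $m=n,n+1,n+2,n+3$ then gives the four coordinate differences $2^{n+1},2^{n+2},2^{n+3},2^{n+4}$, i.e. $2^{n+1}$ times $1,2,4,8$ respectively. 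Factoring out $2^{n+1}$ collapses the display above into $2^{n+1}(1+2\textbf{e}_{1}+4\textbf{e}_{2}+8\textbf{e}_{3})=2^{n+1}\Theta$, which is the first claim.

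For the modified statement I would run the identical argument, now using $K_{m}^{(3)}=2^{m}+X_{m}+2X_{m+1}$; the periodicity again annihilates the $X$-terms and yields $K_{m+3}^{(3)}-K_{m}^{(3)}=2^{m+3}-2^{m}=7\cdot 2^{m}$. Substituting $m=n,n+1,n+2,n+3$ produces coordinate differences $7\cdot 2^{n}$ times $1,2,4,8$, and factoring gives $7\cdot 2^{n}\,\Theta$.

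I do not expect any genuine obstacle here: the quaternion algebra structure is inert throughout (only the $\mathbb{R}$-module structure is used), and the sole point to notice is that the period-$3$ sequence $X_{n}$ cancels exactly in these index-shifted differences, with the clean constant $7$ emerging as $2^{3}-1$.
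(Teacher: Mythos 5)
Your proposal is correct and takes essentially the same route as the paper: both reduce the quaternion identities coordinatewise to the scalar relations $J_{m+3}^{(3)}-J_{m}^{(3)}=2^{m+1}$ and $K_{m+3}^{(3)}-K_{m}^{(3)}=7\cdot 2^{m}$, then factor out $2^{n+1}\Theta$ (respectively $7\cdot 2^{n}\Theta$). The only difference is that the paper invokes these scalar relations as known facts, whereas you additionally verify them from the Binet formulas using the $3$-periodicity of $X_{n}$, which makes your write-up slightly more self-contained but not a different argument.
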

\begin{proof}
Considering the Definition \ref{d} and the relation $J_{n+3}^{(3)}-2^{n+1}=J_{n}^{(3)}$ for $n\geq 0$, we get
\begin{align*}
J\mathcal{G}_{n}^{(3)}&=J_{n}^{(3)}+J_{n+1}^{(3)}\textbf{e}_{1}+J_{n+2}^{(3)}\textbf{e}_{2}+J_{n+3}^{(3)}\textbf{e}_{3}\\
&=J_{n+3}^{(3)}-2^{n+1}+\left[J_{n+4}^{(3)}-2^{n+2}\right]\textbf{e}_{1}+\left[J_{n+5}^{(3)}-2^{n+3}\right]\textbf{e}_{2}+\left[J_{n+6}^{(3)}-2^{n+4}\right]\textbf{e}_{3}\\
&=J_{n+3}^{(3)}+J_{n+4}^{(3)}\textbf{e}_{1}+J_{n+5}^{(3)}\textbf{e}_{2}+J_{n+6}^{(3)}\textbf{e}_{3}-2^{n+1}(1+2\textnormal{\textbf{e}}_{1}+4\textnormal{\textbf{e}}_{2}+8\textnormal{\textbf{e}}_{3})\\
&=J\mathcal{G}_{n+3}^{(3)}-2^{n+1}\Theta.
\end{align*} 
The last equation gives the first equation in theorem. The proof of the second equation is very similar using the identity $K_{n+3}^{(3)}-7\cdot 2^{n}=K_{n}^{(3)}$ for modified third-order Jacobsthal numbers.
\end{proof}

Binet formulas for the TJ3p and MTJ3p generalized quaternions can be found in the following theorem.
\begin{theorem}\label{te3}
For $n\geq 0$, the $n$-th TJ3p and MTJ3p generalized quaternions are, respectively
\begin{align*}
J\mathcal{G}_{n}^{(3)}&=\frac{1}{7}\left[2^{n+1}\Theta+X_{n}\textnormal{\textbf{A}}-X_{n+1}\textnormal{\textbf{B}}\right]\\
&=\frac{1}{7}\left\{ 
\begin{array}{ccc}
2^{n+1}\Theta-\textnormal{\textbf{B}}& \textrm{if}& \mymod{n}{0}{3} \\ 
2^{n+1}\Theta+\textnormal{\textbf{A}}+\textnormal{\textbf{B}}& \textrm{if} & \mymod{n}{1}{3} \\ 
2^{n+1}\Theta-\textnormal{\textbf{A}}& \textrm{if} & \mymod{n}{2}{3}
\end{array}
\right. 
\end{align*}
and
\begin{align*}
K\mathcal{G}_{n}^{(3)}&=2^{n}\Theta+X_{n}\textnormal{\textbf{C}}-X_{n+1}\textnormal{\textbf{D}}=\left\{ 
\begin{array}{ccc}
2^{n}\Theta-\textnormal{\textbf{D}}& \textrm{if}& \mymod{n}{0}{3} \\ 
2^{n}\Theta+\textnormal{\textbf{C}}+\textnormal{\textbf{D}}& \textrm{if} & \mymod{n}{1}{3} \\ 
2^{n}\Theta-\textnormal{\textbf{C}}& \textrm{if} & \mymod{n}{2}{3}
\end{array}
\right. ,
\end{align*}
where $\Theta=1+2\textnormal{\textbf{e}}_{1}+4\textnormal{\textbf{e}}_{2}+8\textnormal{\textbf{e}}_{3}$, $\textnormal{\textbf{A}}=1+2\textnormal{\textbf{e}}_{1}-3\textnormal{\textbf{e}}_{2}+\textnormal{\textbf{e}}_{3}$,  $\textnormal{\textbf{B}}=2-3\textnormal{\textbf{e}}_{1}+\textnormal{\textbf{e}}_{2}+2\textnormal{\textbf{e}}_{3}$, $\textnormal{\textbf{C}}=1-2\textnormal{\textbf{e}}_{1}+\textnormal{\textbf{e}}_{2}+\textnormal{\textbf{e}}_{3}$ and $\textnormal{\textbf{D}}=-2+\textnormal{\textbf{e}}_{1}+\textnormal{\textbf{e}}_{2}-2\textnormal{\textbf{e}}_{3}$.
\end{theorem}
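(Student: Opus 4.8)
The plan is to substitute the scalar Binet formulas for $J_{n}^{(3)}$ and $K_{n}^{(3)}$ (stated in the introduction) directly into the defining expression of Definition \ref{d} and then collect the result componentwise. Writing $\textbf{e}_{0}=1$ for brevity, we have $J\mathcal{G}_{n}^{(3)}=\sum_{k=0}^{3}J_{n+k}^{(3)}\textbf{e}_{k}$, and inserting $J_{n+k}^{(3)}=\frac{1}{7}[2^{n+k+1}+X_{n+k}-2X_{n+k+1}]$ splits the sum into three blocks: one geometric block in the powers of $2$, and two blocks carrying the cyclic sequence $X$.

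First I would dispatch the power-of-two block. Factoring $2^{n+1}$ out of $\sum_{k=0}^{3}2^{n+k+1}\textbf{e}_{k}$ leaves $\sum_{k=0}^{3}2^{k}\textbf{e}_{k}=1+2\textbf{e}_{1}+4\textbf{e}_{2}+8\textbf{e}_{3}=\Theta$, producing the term $\frac{1}{7}2^{n+1}\Theta$ exactly as stated. The modified case is identical with $2^{n+k}$ in place of $2^{n+k+1}$, yielding $2^{n}\Theta$.

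The heart of the argument, and the only place where care is needed, is the cyclic block $\frac{1}{7}\sum_{k=0}^{3}[X_{n+k}-2X_{n+k+1}]\textbf{e}_{k}$. Here I would use the three-periodicity $X_{n+3}=X_{n}$, $X_{n+4}=X_{n+1}$ together with the identity $X_{n+2}=-X_{n}-X_{n+1}$ (which follows from $X_{n}+X_{n+1}+X_{n+2}=0$, valid since any three consecutive terms of $\{0,1,-1,0,1,-1,\dots\}$ sum to zero) to rewrite every occurrence of $X_{n+2},X_{n+3},X_{n+4}$ in terms of the two basis values $X_{n}$ and $X_{n+1}$. After this reduction, collecting the coefficient of $X_{n}$ across the four components gives $1+2\textbf{e}_{1}-3\textbf{e}_{2}+\textbf{e}_{3}=\textbf{A}$, and collecting the coefficient of $X_{n+1}$ gives $-(2-3\textbf{e}_{1}+\textbf{e}_{2}+2\textbf{e}_{3})=-\textbf{B}$, which assembles into $\frac{1}{7}[2^{n+1}\Theta+X_{n}\textbf{A}-X_{n+1}\textbf{B}]$. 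The same bookkeeping applied to the modified block $\sum_{k=0}^{3}[X_{n+k}+2X_{n+k+1}]\textbf{e}_{k}$ produces the coefficient $\textbf{C}$ of $X_{n}$ and $-\textbf{D}$ of $X_{n+1}$.

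Finally, I would obtain the piecewise forms by reading off the values of $(X_{n},X_{n+1})$ in each residue class: $(0,1)$ when $\mymod{n}{0}{3}$, $(1,-1)$ when $\mymod{n}{1}{3}$, and $(-1,0)$ when $\mymod{n}{2}{3}$. Substituting these into the closed forms immediately yields $-\textbf{B}$, $\textbf{A}+\textbf{B}$, $-\textbf{A}$ in the three cases (and similarly $-\textbf{D}$, $\textbf{C}+\textbf{D}$, $-\textbf{C}$ for the modified quaternions). The whole proof is a direct computation; the main risk is a sign or indexing slip in the reduction of $X_{n+2}$ and in matching the assembled coefficients to the prescribed constants $\textbf{A},\textbf{B},\textbf{C},\textbf{D}$, so I would double-check those two coefficient collections most carefully.
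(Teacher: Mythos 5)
Your proposal is correct and follows essentially the same route as the paper's own proof: substitute the scalar Binet formulas into Definition \ref{d}, factor the power-of-two block into $\Theta$, and reduce the cyclic block via $X_{n+2}=-X_{n+1}-X_{n}$ (the paper applies this recurrence repeatedly where you also invoke the equivalent periodicity $X_{n+3}=X_{n}$) to collect the coefficients $\textnormal{\textbf{A}}$, $-\textnormal{\textbf{B}}$ and, analogously, $\textnormal{\textbf{C}}$, $-\textnormal{\textbf{D}}$. Your coefficient collections check out, and your explicit case-by-case verification of the piecewise forms from $(X_{n},X_{n+1})\in\{(0,1),(1,-1),(-1,0)\}$ is a small addition the paper leaves implicit.
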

\begin{proof}
From the Definition \ref{d} for TJ3p generalized quaternions, the Binet formula for the third-order Jacobsthal numbers and recurrence relation $X_{n+2}=-X_{n+1}-X_{n}$ for $n\geq 0$, we have
\begin{align*}
7J\mathcal{G}_{n}^{(3)}&=7J_{n}^{(3)}+7J_{n+1}^{(3)}\textbf{e}_{1}+7J_{n+2}^{(3)}\textbf{e}_{2}+7J_{n+3}^{(3)}\textbf{e}_{3}\\
&=2^{n+1}+X_{n}-2X_{n+1}\\
&\ \ + \left[2^{n+2}+X_{n+1}-2X_{n+2}\right]\textbf{e}_{1}\\
&\ \ + \left[2^{n+3}+X_{n+2}-2X_{n+3}\right]\textbf{e}_{2}\\
&\ \ + \left[2^{n+4}+X_{n+3}-2X_{n+4}\right]\textbf{e}_{3}\\
&=2^{n+1}(1+2\textbf{e}_{1}+4\textbf{e}_{2}+8\textbf{e}_{3})+X_{n}(1+2\textbf{e}_{1}-3\textbf{e}_{2}+\textbf{e}_{3})\\
&\ \ - X_{n+1}(2-3\textbf{e}_{1}+\textbf{e}_{2}+2\textbf{e}_{3})\\
&=2^{n+1}\Theta+X_{n}\textnormal{\textbf{A}}-X_{n+1}\textnormal{\textbf{B}},
\end{align*}
where $\Theta=1+2\textbf{e}_{1}+4\textbf{e}_{2}+8\textbf{e}_{3}$, $\textnormal{\textbf{A}}=1+2\textbf{e}_{1}-3\textbf{e}_{2}+\textbf{e}_{3}$ and $\textnormal{\textbf{B}}=2-3\textbf{e}_{1}+\textbf{e}_{2}+2\textbf{e}_{3}$. The last equation gives the first equation in theorem. The proof of the second equation is very similar.
\end{proof}
The notation $\Theta=1+2\textnormal{\textbf{e}}_{1}+4\textnormal{\textbf{e}}_{2}+8\textnormal{\textbf{e}}_{3}$, $\textnormal{\textbf{A}}=1+2\textnormal{\textbf{e}}_{1}-3\textnormal{\textbf{e}}_{2}+\textnormal{\textbf{e}}_{3}$,  $\textnormal{\textbf{B}}=2-3\textnormal{\textbf{e}}_{1}+\textnormal{\textbf{e}}_{2}+2\textnormal{\textbf{e}}_{3}$, $\textnormal{\textbf{C}}=1-2\textnormal{\textbf{e}}_{1}+\textnormal{\textbf{e}}_{2}+\textnormal{\textbf{e}}_{3}$ and $\textnormal{\textbf{D}}=-2+\textnormal{\textbf{e}}_{1}+\textnormal{\textbf{e}}_{2}-2\textnormal{\textbf{e}}_{3}$ of the previous theorem will be used in the rest of the paper.

It should be noted that
\begin{align*}
\textnormal{\textbf{B}}\textnormal{\textbf{A}}&=(2-3\textbf{e}_{1}+\textbf{e}_{2}+2\textbf{e}_{3})(1+2\textbf{e}_{1}-3\textbf{e}_{2}+\textbf{e}_{3})\\
&=2+4\textbf{e}_{1}-6\textbf{e}_{2}+2\textbf{e}_{3}\\
&\ \ -3\textbf{e}_{1}+6\lambda_{1}\lambda_{2}+9\textbf{e}_{3}+3\lambda_{2}\textbf{e}_{2}\\
&\ \ +\textbf{e}_{2}-2\lambda_{1}\textbf{e}_{3}+3\lambda_{1}\lambda_{3}+\lambda_{3}\textbf{e}_{1}\\
&\ \ +2\textbf{e}_{3}+4\lambda_{2}\textbf{e}_{2}+6\lambda_{3}\textbf{e}_{1}-2\lambda_{2}\lambda_{3}\\
&=2+6\lambda_{1}\lambda_{2}+3\lambda_{1}\lambda_{3}-2\lambda_{2}\lambda_{3}+\textbf{e}_{1}-5\textbf{e}_{2}+4\textbf{e}_{3}+
7(\lambda_{3}\textbf{e}_{1}+\lambda_{2}\textbf{e}_{2}+\lambda_{1}\textbf{e}_{3})\\
&=\Psi+7\Omega,
\end{align*}
where $\Psi=2+6\lambda_{1}\lambda_{2}+3\lambda_{1}\lambda_{3}-2\lambda_{2}\lambda_{3}+\textbf{e}_{1}-5\textbf{e}_{2}+4\textbf{e}_{3}$ and $\Omega=\lambda_{3}\textbf{e}_{1}+\lambda_{2}\textbf{e}_{2}+\lambda_{1}\textbf{e}_{3}$.

Then, using the multiplicative rule in Eq. (\ref{e1}), we obtain the next identities
\begin{equation}\label{m1}
\textnormal{\textbf{B}}\textnormal{\textbf{A}}=\Psi+7\Omega,\ \ \textnormal{\textbf{A}}\textnormal{\textbf{B}}=\Psi-7\Omega
\end{equation}
and
\begin{equation}\label{m2}
\textnormal{\textbf{D}}\textnormal{\textbf{C}}=\Phi+3\Omega,\ \ \textnormal{\textbf{C}}\textnormal{\textbf{D}}=\Phi-3\Omega,
\end{equation}
where
\begin{align*}
\Psi&=2+6\lambda_{1}\lambda_{2}+3\lambda_{1}\lambda_{3}-2\lambda_{2}\lambda_{3}+\textbf{e}_{1}-5\textbf{e}_{2}+4\textbf{e}_{3},\\
\Phi&=-2+2\lambda_{1}\lambda_{2}-\lambda_{1}\lambda_{3}+2\lambda_{2}\lambda_{3}+5\textbf{e}_{1}-\textbf{e}_{2}-4\textbf{e}_{3},\\
\Omega&=\lambda_{3}\textbf{e}_{1}+\lambda_{2}\textbf{e}_{2}+\lambda_{1}\textbf{e}_{3}.
\end{align*}

Eqs. (\ref{m1}) and (\ref{m2}) will be used for simplification in the next section. It should be noted that
\begin{equation}\label{m3}
\textnormal{\textbf{B}}\textnormal{\textbf{A}}=\textnormal{\textbf{A}}\textnormal{\textbf{B}}+14\Omega,\ \ \textnormal{\textbf{D}}\textnormal{\textbf{C}}=\textnormal{\textbf{C}}\textnormal{\textbf{D}}+6\Omega.
\end{equation}

\section{Main Results}
In this section, we present generalizations for the some well-known identities of third-order Jacobsthal and modified third-order Jacobsthal numbers. We begin with the Vajda's identities for the TJ3p and MTJ3p generalized quaternions given in the following theorem.

\begin{theorem}\label{te4}
For integers $n$, $a$ and $b$, we have
$$
J\mathcal{G}_{n+a}^{(3)}J\mathcal{G}_{n+b}^{(3)}-J\mathcal{G}_{n}^{(3)}J\mathcal{G}_{n+a+b}^{(3)}=\frac{1}{49}\left\lbrace \begin{array}{c}
2^{n+1}\left[\Theta Y_{n+b}(a)-2^{b}Y_{n}(a)\Theta\right]\\
+X_{a}\left[X_{b}\varXi-14X_{b+2}\Omega\right]
\end{array}\right\rbrace 
$$
and
$$
K\mathcal{G}_{n+a}^{(3)}K\mathcal{G}_{n+b}^{(3)}-K\mathcal{G}_{n}^{(3)}K\mathcal{G}_{n+a+b}^{(3)}=\left\lbrace \begin{array}{c}
2^{n}\left[\Theta Y_{n+b}^{*}(a)-2^{b}Y_{n}^{*}(a)\Theta\right]\\
+X_{a}\left[X_{b}\varXi^{*}-6X_{b+2}\Omega\right]
\end{array}\right\rbrace ,
$$
where $Y_{n}(a)=X_{n}(2^{a}\textnormal{\textbf{A}}+X_{a+2}\textnormal{\textbf{A}}-X_{a}\textnormal{\textbf{B}})-X_{n+1}(2^{a}\textnormal{\textbf{B}}+X_{a}\textnormal{\textbf{A}}-X_{a+1}\textnormal{\textbf{B}})$, $Y_{n}^{*}(a)=X_{n}(2^{a}\textnormal{\textbf{C}}+X_{a+2}\textnormal{\textbf{C}}-X_{a}\textnormal{\textbf{D}})-X_{n+1}(2^{a}\textnormal{\textbf{D}}+X_{a}\textnormal{\textbf{C}}-X_{a+1}\textnormal{\textbf{D}})$, $\varXi=\textnormal{\textbf{A}}^{2}+\textnormal{\textbf{A}}\textnormal{\textbf{B}}+\textnormal{\textbf{B}}^{2}$ and $\varXi^{*}=\textnormal{\textbf{C}}^{2}+\textnormal{\textbf{C}}\textnormal{\textbf{D}}+\textnormal{\textbf{D}}^{2}$.
\end{theorem}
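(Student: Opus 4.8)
The plan is to deduce everything from the Binet formula of Theorem \ref{te3}, which I rewrite as $7J\mathcal{G}_m^{(3)}=2^{m+1}\Theta+X_m\textbf{A}-X_{m+1}\textbf{B}$, combined with a few elementary identities for the auxiliary sequence $\{X_n\}$. Multiplying through by $49$, I would expand
\[
49\,J\mathcal{G}_{n+a}^{(3)}J\mathcal{G}_{n+b}^{(3)}=\bigl(2^{n+a+1}\Theta+X_{n+a}\textbf{A}-X_{n+a+1}\textbf{B}\bigr)\bigl(2^{n+b+1}\Theta+X_{n+b}\textbf{A}-X_{n+b+1}\textbf{B}\bigr)
\]
and the analogous expansion of $49\,J\mathcal{G}_n^{(3)}J\mathcal{G}_{n+a+b}^{(3)}$, keeping the nine ordered monomials $\Theta^2,\ \Theta\textbf{A},\ \Theta\textbf{B},\ \textbf{A}\Theta,\ \textbf{B}\Theta,\ \textbf{A}^2,\ \textbf{A}\textbf{B},\ \textbf{B}\textbf{A},\ \textbf{B}^2$ strictly separate, since the algebra is noncommutative. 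Because the two products share the same index sum $(n+a)+(n+b)=n+(n+a+b)$, the $\Theta^2$ coefficients are both $2^{2n+a+b+2}$ and cancel in the difference; the task reduces to identifying the coefficient of each of the eight surviving monomials.

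For the purely quadratic monomials the coefficients are scalars in the $X$'s, and I would evaluate them with the Vajda-type identity $X_{n+a}X_{n+b}-X_nX_{n+a+b}=X_aX_b$ — which holds because $\omega_1\omega_2=1$, so the factor $(\omega_1\omega_2)^n$ disappears in the Binet computation — and its shifted versions, giving coefficients $X_aX_b$, $-X_aX_{b+1}$, $-X_aX_{b-1}$, $X_aX_b$ for $\textbf{A}^2,\textbf{A}\textbf{B},\textbf{B}\textbf{A},\textbf{B}^2$ respectively. Factoring out $X_a$ leaves $X_a\bigl[X_b\textbf{A}^2-X_{b+1}\textbf{A}\textbf{B}-X_{b-1}\textbf{B}\textbf{A}+X_b\textbf{B}^2\bigr]$. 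Using the $3$-periodicity $X_{b-1}=X_{b+2}$ and the recurrence $X_{b+1}+X_b=-X_{b+2}$, this reorganizes as $X_a\bigl[X_b(\textbf{A}^2+\textbf{A}\textbf{B}+\textbf{B}^2)+X_{b+2}(\textbf{A}\textbf{B}-\textbf{B}\textbf{A})\bigr]$, and the commutator relation $\textbf{A}\textbf{B}-\textbf{B}\textbf{A}=-14\Omega$ from Eq. (\ref{m3}) turns this into exactly $X_a\bigl[X_b\varXi-14X_{b+2}\Omega\bigr]$.

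The mixed monomials split into a left-$\Theta$ group ($\Theta\textbf{A},\Theta\textbf{B}$, coming from the left factor being the pure-$\Theta$ term) and a right-$\Theta$ group ($\textbf{A}\Theta,\textbf{B}\Theta$). I would show that, after extracting $2^{n+1}$, the left-$\Theta$ coefficients assemble precisely into $\Theta\,Y_{n+b}(a)$ and the right-$\Theta$ coefficients into $-2^{b}Y_n(a)\Theta$. Each such check collapses to the addition formula $X_{m+a}=X_{m+1}X_a-X_mX_{a-1}$ (again using $\omega_1\omega_2=1$) together with $X_{a+2}=X_{a-1}$; for example the coefficient of $\Theta\textbf{A}$ is $2^aX_{n+b}-X_{n+a+b}$, which equals $X_{n+b}(2^a+X_{a+2})-X_{n+b+1}X_a$, the coefficient of $\textbf{A}$ in $Y_{n+b}(a)$. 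Assembling the quadratic part with the two $\Theta$-groups and dividing by $49$ yields the first identity.

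The MTJ3p case is entirely parallel: Theorem \ref{te3} gives $K\mathcal{G}_m^{(3)}=2^m\Theta+X_m\textbf{C}-X_{m+1}\textbf{D}$ with no prefactor, so the $\tfrac1{49}$ is absent; one repeats the computation with $\textbf{C},\textbf{D},\varXi^{*},Y_n^{*}$ replacing $\textbf{A},\textbf{B},\varXi,Y_n$, and the commutator $\textbf{C}\textbf{D}-\textbf{D}\textbf{C}=-6\Omega$ from Eq. (\ref{m3}) produces the coefficient $-6X_{b+2}\Omega$. The main obstacle throughout is bookkeeping: since $\textbf{A}\textbf{B}\neq\textbf{B}\textbf{A}$ and $\Theta$ fails to commute with $\textbf{A},\textbf{B}$, left- and right-multiples must never be merged; the only place noncommutativity yields a genuinely new term rather than a cancellation is the $\textbf{A}\textbf{B}-\textbf{B}\textbf{A}$ commutator, which is exactly what feeds the $\Omega$-contribution. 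Recognizing that $Y_n(a)$ is engineered to repackage the eight mixed coefficients into the stated compact form is the conceptual crux.
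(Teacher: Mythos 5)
Your proposal is correct and takes essentially the same route as the paper: the paper sets $Z_{n}=X_{n}\textbf{A}-X_{n+1}\textbf{B}$, evaluates $Z_{n+a}Z_{n+b}-Z_{n}Z_{n+a+b}$ with the Vajda identity $X_{n+a}X_{n+b}-X_{n}X_{n+a+b}=X_{a}X_{b}$ and the commutator relation $\textbf{B}\textbf{A}=\textbf{A}\textbf{B}+14\Omega$ (resp.\ $\textbf{D}\textbf{C}=\textbf{C}\textbf{D}+6\Omega$), and then collects the $\Theta$-cross terms of the Binet expansion into $2^{n+1}\left[\Theta Y_{n+b}(a)-2^{b}Y_{n}(a)\Theta\right]$, which is exactly your nine-monomial bookkeeping in slightly different packaging. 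The only difference is cosmetic: you justify the identification of the mixed coefficients with $Y_{n+b}(a)$ and $-2^{b}Y_{n}(a)$ explicitly via the addition formula $X_{m+a}=X_{m+1}X_{a}-X_{m}X_{a-1}$, a step the paper asserts without detail.
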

\begin{proof}
Let us consider $Z_{n}=X_{n}\textnormal{\textbf{A}}-X_{n+1}\textnormal{\textbf{B}}$. Using the Vajda's identity for sequence $\{X_{n}\}$, $X_{n+a}X_{n+b}-X_{n}X_{n+a+b}=X_{a}X_{b}$, we have
\begin{align*}
Z_{n+a}Z_{n+b}&-Z_{n}Z_{n+a+b}\\
&=\left[X_{n+a}\textnormal{\textbf{A}}-X_{n+a+1}\textnormal{\textbf{B}}\right]\left[X_{n+b}\textnormal{\textbf{A}}-X_{n+b+1}\textnormal{\textbf{B}}\right]\\
&\ \ - \left[X_{n}\textnormal{\textbf{A}}-X_{n+1}\textnormal{\textbf{B}}\right]\left[X_{n+a+b}\textnormal{\textbf{A}}-X_{n+a+b+1}\textnormal{\textbf{B}}\right]\\
&=\left[X_{n+a}X_{n+b}-X_{n}X_{n+a+b}\right]\textnormal{\textbf{A}}^{2}\\
&\ \ - \left[X_{n+a}X_{n+b+1}-X_{n}X_{n+a+b+1}\right]\textnormal{\textbf{A}}\textnormal{\textbf{B}}\\
&\ \ - \left[X_{n+a+1}X_{n+b}-X_{n+1}X_{n+a+b}\right]\textnormal{\textbf{B}}\textnormal{\textbf{A}}\\
&\ \ + \left[X_{n+a+1}X_{n+b+1}-X_{n+1}X_{n+a+b+1}\right]\textnormal{\textbf{B}}^{2}\\
&=X_{a}X_{b}\textnormal{\textbf{A}}^{2}-X_{a}X_{b+1}\textnormal{\textbf{A}}\textnormal{\textbf{B}}-X_{a}X_{b-1}\textnormal{\textbf{B}}\textnormal{\textbf{A}}+X_{a}X_{b}\textnormal{\textbf{B}}^{2}\\
&=X_{a}X_{b}\textnormal{\textbf{A}}^{2}-X_{a}X_{b+1}\textnormal{\textbf{A}}\textnormal{\textbf{B}}-X_{a}X_{b-1}(\textnormal{\textbf{A}}\textnormal{\textbf{B}}+14\Omega)+X_{a}X_{b}\textnormal{\textbf{B}}^{2}\\
&=X_{a}X_{b}(\textnormal{\textbf{A}}^{2}+\textnormal{\textbf{A}}\textnormal{\textbf{B}}+\textnormal{\textbf{B}}^{2})-14X_{a}X_{b+2}\Omega\\
&=X_{a}\left[X_{b}\varXi -14X_{b+2}\Omega\right],
\end{align*}
where $\varXi=\textnormal{\textbf{A}}^{2}+\textnormal{\textbf{A}}\textnormal{\textbf{B}}+\textnormal{\textbf{B}}^{2}$. From the Binet formula for the TJ3p generalized quaternions in Theorem \ref{te3}, we obtain
\begin{align*}
49\left[J\mathcal{G}_{n+a}^{(3)}J\mathcal{G}_{n+b}^{(3)}-J\mathcal{G}_{n}^{(3)}J\mathcal{G}_{n+a+b}^{(3)}\right]&=\left[2^{n+a+1}\Theta+Z_{n+a}\right]\left[2^{n+b+1}\Theta+Z_{n+b}\right]\\
&\ \ - \left[2^{n+1}\Theta+Z_{n}\right]\left[2^{n+a+b+1}\Theta+Z_{n+a+b}\right]\\
&=2^{n+a+1}\Theta Z_{n+b}+2^{n+b+1}Z_{n+a}\Theta \\
&\ \ - 2^{n+1}\Theta Z_{n+a+b}+2^{n+a+b+1}Z_{n}\Theta \\
&\ \ + Z_{n+a}Z_{n+b}-Z_{n}Z_{n+a+b}\\
&=2^{n+1}\left[\Theta Y_{n+b}(a)-2^{b}Y_{n}(a)\Theta\right]\\
&\ \ +X_{a}\left[X_{b}\varXi-14X_{b+2}\Omega\right],
\end{align*}
where $Y_{n}(a)=X_{n}(2^{a}\textnormal{\textbf{A}}+X_{a+2}\textnormal{\textbf{A}}-X_{a}\textnormal{\textbf{B}})-X_{n+1}(2^{a}\textnormal{\textbf{B}}+X_{a}\textnormal{\textbf{A}}-X_{a+1}\textnormal{\textbf{B}})$. The last equation gives the first equation in theorem. The proof of the second equation is very similar.
\end{proof}

 Taking $b=-a$ in the Theorem \ref{te4} with the identities $X_{-n}=-X_{n}$ and $X_{n}=X_{n+3}$ gives Catalan's identities for the TJ3p and MTJ3p generalized quaternions.

\begin{corollary}\label{co1}
Let $n$ and $a$ be two integers such that $n\geq a$, we have
$$
J\mathcal{G}_{n+a}^{(3)}J\mathcal{G}_{n-a}^{(3)}-\left[J\mathcal{G}_{n}^{(3)}\right]^{2}=\frac{1}{49}\left\lbrace \begin{array}{c}
2^{n+1}\left[\Theta Y_{n-a}(a)-2^{-a}Y_{n}(a)\Theta\right]\\
-X_{a}\left[X_{a}\varXi-14X_{a+1}\Omega\right]
\end{array}\right\rbrace 
$$
and
$$
K\mathcal{G}_{n+a}^{(3)}K\mathcal{G}_{n-a}^{(3)}-\left[K\mathcal{G}_{n}^{(3)}\right]^{2}=\left\lbrace \begin{array}{c}
2^{n}\left[\Theta Y_{n-a}^{*}(a)-2^{-a}Y_{n}^{*}(a)\Theta\right]\\
-X_{a}\left[X_{a}\varXi^{*}-6X_{a+1}\Omega\right]
\end{array}\right\rbrace ,
$$
with $Y_{n}(a)$, $Y_{n}^{*}(a)$, $\varXi$ and $\varXi^{*}$ as in Theorem \ref{te4}.
\end{corollary}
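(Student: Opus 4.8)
The plan is to derive both Catalan identities as the single specialization $b=-a$ of the Vajda identities in Theorem~\ref{te4}; since that theorem holds for arbitrary integers $n,a,b$, the substitution is legitimate and essentially no new computation is needed beyond index bookkeeping for the cyclic sequence $\{X_n\}$. First I would put $b=-a$ on the left-hand side of the TJ3p identity. Then $n+a+b=n$, so the product $J\mathcal{G}_n^{(3)}J\mathcal{G}_{n+a+b}^{(3)}$ collapses to $[J\mathcal{G}_n^{(3)}]^2$ and the left-hand side becomes exactly the Catalan expression $J\mathcal{G}_{n+a}^{(3)}J\mathcal{G}_{n-a}^{(3)}-[J\mathcal{G}_n^{(3)}]^2$.

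On the right-hand side the first bracket transforms purely mechanically: the subscript of $Y$ goes from $n+b$ to $n-a$ and the power $2^b$ becomes $2^{-a}$, giving $2^{n+1}[\Theta Y_{n-a}(a)-2^{-a}Y_n(a)\Theta]$. Here the argument $a$ of $Y_{n-a}(a)$ and $Y_n(a)$ is untouched, so their defining expressions need no rewriting.

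The only genuinely non-substitutional step is the second bracket $X_a[X_b\varXi-14X_{b+2}\Omega]$, and this is where the two listed identities do the work. I would rewrite $X_b=X_{-a}=-X_a$ directly by $X_{-n}=-X_n$, and reduce $X_{b+2}=X_{2-a}$ using \emph{both} relations: $X_{2-a}=-X_{a-2}$ by oddness and then $X_{a-2}=X_{a+1}$ by the period-three cyclicity $X_n=X_{n+3}$, so that $X_{b+2}=-X_{a+1}$. Since both $X_b$ and $X_{b+2}$ thereby acquire a minus sign, an overall $-1$ factors out of the bracket, converting it into $-X_a[X_a\varXi-14X_{a+1}\Omega]$ and, in particular, turning the ``$+$'' that joins the two brackets in Vajda into the ``$-$'' of Catalan. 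This matches the first equation of the corollary.

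The MTJ3p identity follows by the identical specialization of the second equation of Theorem~\ref{te4}: $2^{n+1}$ is replaced by $2^n$, the decorated quantities $Y^*$, $\varXi^*$ appear in place of $Y$, $\varXi$, and the constant $14$ is replaced by $6$, while the manipulation of $X_b$ and $X_{b+2}$ is word-for-word the same. I expect the one place to be careful is the period-three reduction $X_{a-2}=X_{a+1}$ together with correctly tracking the two sign changes; the hypothesis $n\geq a$ serves only to keep $n-a\geq 0$ so that $J\mathcal{G}_{n-a}^{(3)}$ and $K\mathcal{G}_{n-a}^{(3)}$ fall under Definition~\ref{d}.
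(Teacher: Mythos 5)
Your proposal is correct and follows exactly the route the paper itself indicates: it obtains the corollary by setting $b=-a$ in Theorem~\ref{te4} and then reducing the indices via $X_{-n}=-X_{n}$ and the period-three relation $X_{n}=X_{n+3}$, so that $X_{b}=-X_{a}$ and $X_{b+2}=X_{2-a}=-X_{a-2}=-X_{a+1}$, which produces precisely the sign change from the ``$+X_{a}[\cdots]$'' of Vajda to the ``$-X_{a}[\cdots]$'' of Catalan. Your bookkeeping of the first bracket ($Y_{n+b}\mapsto Y_{n-a}$, $2^{b}\mapsto 2^{-a}$) and of the starred MTJ3p case matches the paper's statement, so nothing further is needed.
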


Similarly, taking $a=1$ in Corollary \ref{co1} gives Cassini's identities for the TJ3p and MTJ3p generalized quaternions.
\begin{corollary}\label{co2}
For any integer $n\geq 1$, we have
$$
J\mathcal{G}_{n+1}^{(3)}J\mathcal{G}_{n-1}^{(3)}-\left[J\mathcal{G}_{n}^{(3)}\right]^{2}=\frac{1}{49}\left[
2^{n}\left(2\Theta Y_{n-1}(1)-Y_{n}(1)\Theta\right)-\left(\varXi+14\Omega\right)\right] 
$$
and
$$
K\mathcal{G}_{n+1}^{(3)}K\mathcal{G}_{n-1}^{(3)}-\left[K\mathcal{G}_{n}^{(3)}\right]^{2}=2^{n-1}\left[2\Theta Y_{n-1}^{*}(1)-Y_{n}^{*}(1)\Theta\right]-\left[\varXi^{*}+6\Omega\right],
$$
with $Y_{n}(1)=X_{n}(2\textnormal{\textbf{A}}-\textnormal{\textbf{B}})-X_{n+1}(3\textnormal{\textbf{B}}+\textnormal{\textbf{A}})$, $Y_{n}^{*}(1)=X_{n}(2\textnormal{\textbf{C}}-\textnormal{\textbf{D}})-X_{n+1}(3\textnormal{\textbf{D}}+\textnormal{\textbf{C}})$, $\varXi$ and $\varXi^{*}$ as in Theorem \ref{te4}.
\end{corollary}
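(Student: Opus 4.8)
The plan is to obtain Corollary \ref{co2} purely by specializing Corollary \ref{co1} at $a=1$; no fresh manipulation of the quaternion units is required, since the noncommutative products $\textbf{B}\textbf{A}$, $\textbf{A}\textbf{B}$, $\textbf{D}\textbf{C}$, $\textbf{C}\textbf{D}$ have already been packaged into $\varXi$, $\varXi^{*}$, $\Omega$, $Y_n(a)$ and $Y_n^{*}(a)$ in Theorem \ref{te4}. The constraint $n\geq a$ in Corollary \ref{co1} becomes $n\geq 1$, matching the hypothesis of the statement. First I would record the three values of the cyclic sequence that enter the substitution: from the definition of $\{X_n\}$ one reads off $X_1=1$, $X_2=-1$, and $X_3=X_0=0$ (using $X_n=X_{n+3}$).

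Next I would substitute $a=1$ into the Catalan identity for $J\mathcal{G}$. In the first bracket the prefactors combine as $2^{n+1}$ and $2^{n+1}\cdot 2^{-1}=2^{n}$, so pulling out $2^{n}$ turns it into $2^{n}\bigl[2\Theta Y_{n-1}(1)-Y_n(1)\Theta\bigr]$; here the relative order of $\Theta$ and the $Y$-terms must be preserved, as multiplication is noncommutative. In the second bracket $-X_1\bigl[X_1\varXi-14X_2\Omega\bigr]$ collapses to $-\bigl[\varXi+14\Omega\bigr]$ upon inserting $X_1=1$ and $X_2=-1$. Retaining the factor $\tfrac1{49}$ from Corollary \ref{co1} then yields exactly the claimed formula for $J\mathcal{G}_{n+1}^{(3)}J\mathcal{G}_{n-1}^{(3)}-\bigl[J\mathcal{G}_{n}^{(3)}\bigr]^{2}$. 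The specialized $Y_n(1)$ I would read directly from the general $Y_n(a)$: at $a=1$ the coefficients are $2\textbf{A}+X_3\textbf{A}-X_1\textbf{B}=2\textbf{A}-\textbf{B}$ and $2\textbf{B}+X_1\textbf{A}-X_2\textbf{B}=\textbf{A}+3\textbf{B}$, reproducing $Y_n(1)=X_n(2\textbf{A}-\textbf{B})-X_{n+1}(3\textbf{B}+\textbf{A})$.

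The modified case runs in complete parallel. Substituting $a=1$ into the $K\mathcal{G}$ Catalan identity makes the leading prefactors $2^{n}$ and $2^{n-1}$, so factoring out $2^{n-1}$ produces $2^{n-1}\bigl[2\Theta Y_{n-1}^{*}(1)-Y_n^{*}(1)\Theta\bigr]$, while $-X_1\bigl[X_1\varXi^{*}-6X_2\Omega\bigr]$ reduces to $-\bigl[\varXi^{*}+6\Omega\bigr]$; the formula $Y_n^{*}(1)=X_n(2\textbf{C}-\textbf{D})-X_{n+1}(3\textbf{D}+\textbf{C})$ follows by the same substitution into $Y_n^{*}(a)$. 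Since every ingredient is a mechanical specialization of results already proved, I do not expect a genuine obstacle; the only points demanding care are bookkeeping ones — keeping the powers of two straight across the factorization, and, because the algebra is noncommutative, never transposing $\Theta$ past a $Y$-term or $\varXi$ (resp. $\varXi^{*}$) past $\Omega$.
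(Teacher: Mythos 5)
Your proposal is correct and is exactly the paper's route: the paper obtains Corollary \ref{co2} by the same mechanical specialization of the Catalan identities in Corollary \ref{co1} at $a=1$, using $X_{1}=1$, $X_{2}=-1$, $X_{3}=X_{0}=0$ and factoring $2^{n}$ (resp.\ $2^{n-1}$) out of the leading bracket. Your substitutions into $Y_{n}(a)$ and $Y_{n}^{*}(a)$, and your care about the noncommutative ordering of $\Theta$ against the $Y$-terms, all check out, so nothing is missing.
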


For example, if $n=1$ in the previous corollary, we get
\begin{align*}
J\mathcal{G}_{0}^{(3)}&=\frac{1}{7}\left[2\Theta-\textnormal{\textbf{B}}\right],\\
J\mathcal{G}_{1}^{(3)}&=\frac{1}{7}\left[4\Theta+\textnormal{\textbf{A}}+\textnormal{\textbf{B}}\right],\\
J\mathcal{G}_{2}^{(3)}&=\frac{1}{7}\left[8\Theta-\textnormal{\textbf{A}}\right].
\end{align*}

Then, we can write
\begin{align*}
49\left[J\mathcal{G}_{2}^{(3)}J\mathcal{G}_{0}^{(3)}-\left(J\mathcal{G}_{1}^{(3)}\right)^{2}\right]&=\left[8\Theta-\textnormal{\textbf{A}}\right]\left[2\Theta-\textnormal{\textbf{B}}\right]-\left[4\Theta+\textnormal{\textbf{A}}+\textnormal{\textbf{B}}\right]^{2}\\
&=-4\Theta (\textnormal{\textbf{A}}+3\textnormal{\textbf{B}})-2(3\textnormal{\textbf{A}}+2\textnormal{\textbf{B}})\Theta - (\textnormal{\textbf{A}}^{2}+\textnormal{\textbf{B}}\textnormal{\textbf{A}}+\textnormal{\textbf{B}}^{2})
\end{align*}
and
$$
J\mathcal{G}_{2}^{(3)}J\mathcal{G}_{0}^{(3)}-\left[J\mathcal{G}_{1}^{(3)}\right]^{2}=\frac{1}{49}\left[
2\left(2\Theta Y_{0}(1)-Y_{1}(1)\Theta\right)-\left(\varXi+14\Omega\right)\right],
$$
where $Y_{0}(1)=-(\textnormal{\textbf{A}}+3\textnormal{\textbf{B}})$ and $Y_{1}(1)=3\textnormal{\textbf{A}}+2\textnormal{\textbf{B}}$.

Following identities can be proved by using definitions, recurrence relations and Binet formulas for the TJ3p and MTJ3p generalized quaternions.

\begin{theorem}\label{te6}
For any integer $n\geq 0$, we have 
\begin{align*}
J\mathcal{G}_{n}^{(3)}+J\mathcal{G}_{n+1}^{(3)}+J\mathcal{G}_{n+2}^{(3)}&=2^{n+1}\Theta,\\
K\mathcal{G}_{n}^{(3)}+K\mathcal{G}_{n+1}^{(3)}+K\mathcal{G}_{n+2}^{(3)}&=7\cdot2^{n}\Theta,\\
K\mathcal{G}_{n+2}^{(3)}&=J\mathcal{G}_{n+2}^{(3)}+2J\mathcal{G}_{n+1}^{(3)}+6J\mathcal{G}_{n}^{(3)},\\
J\mathcal{G}_{n+2}^{(3)}&=\frac{1}{147}\left[13K\mathcal{G}_{n+2}^{(3)}+48K\mathcal{G}_{n+1}^{(3)}+20K\mathcal{G}_{n}^{(3)}\right],
\end{align*}
where $\Theta=1+2\textnormal{\textbf{e}}_{1}+4\textnormal{\textbf{e}}_{2}+8\textnormal{\textbf{e}}_{3}$.
\end{theorem}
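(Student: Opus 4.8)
The plan is to exploit the fact that each coordinate of $J\mathcal{G}_n^{(3)}$ (respectively $K\mathcal{G}_n^{(3)}$) is merely a shift of the underlying scalar sequence: the coefficient of $\textbf{e}_j$ in $J\mathcal{G}_n^{(3)}$ is $J_{n+j}^{(3)}$, with $j=0$ giving the real part. Consequently any relation that is $\mathbb{Z}$-linear with constant coefficients in shifted copies of these quaternions holds if and only if the corresponding scalar relation holds at every index. So I would reduce each of the four statements to a scalar identity among the $J_m^{(3)}$ and $K_m^{(3)}$ and then verify that scalar identity for all $m$ using the Binet formulas $J_n^{(3)}=\frac{1}{7}[2^{n+1}+X_n-2X_{n+1}]$ and $K_n^{(3)}=2^n+X_n+2X_{n+1}$, together with the periodicity $X_{n+3}=X_n$ and the recurrence $X_{n+2}=-X_{n+1}-X_n$.

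For the first two sums there is an especially short route that bypasses the scalar reduction entirely. Rewriting the recurrence (\ref{r1}) as $J\mathcal{G}_{n+3}^{(3)}=J\mathcal{G}_{n+2}^{(3)}+J\mathcal{G}_{n+1}^{(3)}+2J\mathcal{G}_n^{(3)}$, substituting the identity $J\mathcal{G}_{n+3}^{(3)}=J\mathcal{G}_n^{(3)}+2^{n+1}\Theta$ of Theorem \ref{te2}, and cancelling the common term $J\mathcal{G}_n^{(3)}$ yields at once $J\mathcal{G}_n^{(3)}+J\mathcal{G}_{n+1}^{(3)}+J\mathcal{G}_{n+2}^{(3)}=2^{n+1}\Theta$. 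The same manipulation applied to (\ref{r2}) and the modified version $K\mathcal{G}_{n+3}^{(3)}=K\mathcal{G}_n^{(3)}+7\cdot 2^n\Theta$ of Theorem \ref{te2} produces the second sum; no componentwise argument is needed, and the range $n\geq 0$ matches that of Theorem \ref{te2}.

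For the two cross-relations I would invoke the reduction principle above. The third identity reduces to proving $K_{m+2}^{(3)}=J_{m+2}^{(3)}+2J_{m+1}^{(3)}+6J_m^{(3)}$ for every $m$, and the fourth to $147\,J_{m+2}^{(3)}=13K_{m+2}^{(3)}+48K_{m+1}^{(3)}+20K_m^{(3)}$. Each is checked by inserting the Binet formulas and collecting terms: the powers of $2$ reconcile by a direct arithmetic check (e.g.\ $13\cdot 4+48\cdot 2+20=168=21\cdot 8$ for the fourth, and $8+8+12=28=4\cdot 7$ for the third), while the cyclic parts are handled by using $X_{n+3}=X_n$ and $X_{n+2}=-X_{n+1}-X_n$ to rewrite everything in terms of $X_n$ and $X_{n+1}$, after which both sides collapse to the same linear combination. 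The only real difficulty here is bookkeeping: one must keep the index shifts straight so that, under the reduction, the $\textbf{e}_j$-component of the target equals the scalar identity evaluated at $m=n+j$. This matching is automatic for the third and fourth identities, since both sides are genuine quaternions with the same shift structure, and for the first two it is precisely the doubling pattern of $\Theta=1+2\textbf{e}_1+4\textbf{e}_2+8\textbf{e}_3$ that guarantees it.
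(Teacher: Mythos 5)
Your proof is correct: the shortcut of combining the recurrence $J\mathcal{G}_{n+3}^{(3)}=J\mathcal{G}_{n+2}^{(3)}+J\mathcal{G}_{n+1}^{(3)}+2J\mathcal{G}_{n}^{(3)}$ with Theorem \ref{te2} yields the two sum formulas at once, and your componentwise reduction to scalar identities verified via the Binet formulas works as claimed (both arithmetic checks, $8+8+12=28=4\cdot 7$ and $13\cdot 4+48\cdot 2+20=168=21\cdot 8$, are right, and the cyclic parts do collapse as you say). This is essentially the paper's approach: the paper gives no written proof, stating only that the identities "can be proved by using definitions, recurrence relations and Binet formulas," which are exactly the ingredients you deploy.
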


\section{Sums of the TJ3p and MTJ3p generalized quaternions}
In this section, we present some results concerning sums of terms of the TJ3p and MTJ3p generalized quaternions. Some known equations about this numbers will be given in the following lemma.

\begin{lemma}
For any integer $n\geq 0$, we have
\begin{equation}\label{s1}
\sum_{l=0}^{n}J_{l}^{(3)}=\frac{1}{3}\left[J_{n+2}^{(3)}+2J_{n}^{(3)}-1\right]
\end{equation}
and
\begin{equation}\label{s2}
\sum_{l=0}^{n}K_{l}^{(3)}=\frac{1}{3}\left[K_{n+2}^{(3)}+2K_{n}^{(3)}\right].
\end{equation}
\end{lemma}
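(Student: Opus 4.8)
The plan is to derive both identities directly from the defining third-order recurrence, so that the closed forms are \emph{produced} rather than merely verified. The key observation is that the recurrence $J_{m}^{(3)}=J_{m-1}^{(3)}+J_{m-2}^{(3)}+2J_{m-3}^{(3)}$, applied at $m=l+3$, can be rewritten as
$$
2J_{l}^{(3)}=J_{l+3}^{(3)}-J_{l+2}^{(3)}-J_{l+1}^{(3)},\qquad l\geq 0,
$$
and the identical relation holds verbatim for $K_{l}^{(3)}$ (both are valid for $l\geq 0$, since then $m=l+3\geq 3$).

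First I would sum this identity over $l=0,1,\dots,n$. Writing $S=\sum_{l=0}^{n}J_{l}^{(3)}$ and shifting the index in each of the three sums on the right gives
$$
2S=\sum_{m=3}^{n+3}J_{m}^{(3)}-\sum_{m=2}^{n+2}J_{m}^{(3)}-\sum_{m=1}^{n+1}J_{m}^{(3)}.
$$
Expressing each of these three sums as $S$ corrected by its boundary terms, all the interior contributions cancel and the whole right-hand side collapses to
$$
3S=J_{n+3}^{(3)}-J_{n+1}^{(3)}+J_{0}^{(3)}-J_{2}^{(3)}.
$$

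To finish, I would apply the recurrence one more time in the telescoped form $J_{n+3}^{(3)}-J_{n+1}^{(3)}=J_{n+2}^{(3)}+2J_{n}^{(3)}$ and insert the initial values $J_{0}^{(3)}=0$, $J_{2}^{(3)}=1$, obtaining $3S=J_{n+2}^{(3)}+2J_{n}^{(3)}-1$, which is (\ref{s1}). The argument for (\ref{s2}) runs line for line the same way; the only difference is the boundary constant $K_{0}^{(3)}-K_{2}^{(3)}$, which now \emph{vanishes} (from the numerator of $K(x)$ one reads off $K_{0}^{(3)}=K_{2}^{(3)}=3$), so no additive constant survives and one lands on (\ref{s2}).

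There is no genuine obstacle here, as the computation is elementary; the only point demanding care is the bookkeeping of the boundary terms when reindexing the three shifted sums, and the observation that the trailing constant differs between the two cases precisely because $J_{0}^{(3)}-J_{2}^{(3)}=-1$ while $K_{0}^{(3)}-K_{2}^{(3)}=0$. As an independent cross-check one can instead argue by induction on $n$: the base case $n=0$ pins down the additive constant, and the inductive step $\sum_{l=0}^{n+1}=\sum_{l=0}^{n}+J_{n+1}^{(3)}$ reduces, after clearing the factor $\tfrac{1}{3}$, to exactly the recurrence $J_{n+3}^{(3)}=J_{n+2}^{(3)}+J_{n+1}^{(3)}+2J_{n}^{(3)}$ (and likewise for $K$).
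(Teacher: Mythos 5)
Your proof is correct, but it takes a genuinely different route from the paper: the paper disposes of this lemma with a one-line proof by induction on $n$, whereas you \emph{derive} both formulas by rewriting the recurrence as $2J_{l}^{(3)}=J_{l+3}^{(3)}-J_{l+2}^{(3)}-J_{l+1}^{(3)}$, summing over $l=0,\dots,n$, and telescoping the three reindexed sums down to $3S=J_{n+3}^{(3)}-J_{n+1}^{(3)}+J_{0}^{(3)}-J_{2}^{(3)}$. Your bookkeeping of the boundary terms checks out, as does the final reduction $J_{n+3}^{(3)}-J_{n+1}^{(3)}=J_{n+2}^{(3)}+2J_{n}^{(3)}$. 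What your method buys is that it produces the closed forms without knowing them in advance, and it makes transparent why (\ref{s1}) carries the constant $-1$ while (\ref{s2}) carries none: the constants are exactly $J_{0}^{(3)}-J_{2}^{(3)}=-1$ and $K_{0}^{(3)}-K_{2}^{(3)}=0$. The paper's induction is shorter once the formulas are handed to you, and your closing ``cross-check'' by induction is in fact precisely the paper's argument.

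One point in your proof deserves emphasis: you read $K_{0}^{(3)}=K_{2}^{(3)}=3$ off the numerator of $K(x)$, and this is the right call, but it silently contradicts the initial values stated in the paper's introduction, where $K_{0}^{(3)}=0$. With $K_{0}^{(3)}=0$, identity (\ref{s2}) already fails at $n=0$ (left side $0$, right side $1$), so the introduction's value is a typo: $K_{0}^{(3)}=3$ is the value consistent with the Binet formula $K_{n}^{(3)}=2^{n}+X_{n}+2X_{n+1}$, with the constant term $3+\textbf{e}_{1}+3\textbf{e}_{2}+10\textbf{e}_{3}$ of the MTJ3p generating function in Theorem \ref{te1}, and with the literature on modified third-order Jacobsthal numbers. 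Your argument is therefore valid for the sequence the paper actually works with; had you used the stated initial values instead, your own boundary constant $K_{0}^{(3)}-K_{2}^{(3)}$ would have flagged the inconsistency immediately, which is a further advantage of the telescoping derivation over a bare induction.
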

\begin{proof}
The proof is easy using induction on $n$.
\end{proof}

The next theorem presents a summation formula for the third-order Jacobsthal and modified third-order Jacobsthal 3-parameter generalized quaternions.

\begin{theorem}\label{te7}
Let $n\geq 0$ be an integer. Then
$$
\sum_{l=0}^{n}J\mathcal{G}_{l}^{(3)}=\frac{1}{3}\left[J\mathcal{G}_{n+2}^{(3)}+2J\mathcal{G}_{n}^{(3)}-\left(1+\textnormal{\textbf{e}}_{1}+4\textnormal{\textbf{e}}_{2}+7\textnormal{\textbf{e}}_{3}\right)\right]
$$
and
$$
\sum_{l=0}^{n}K\mathcal{G}_{l}^{(3)}=\frac{1}{3}\left[K\mathcal{G}_{n+2}^{(3)}+2K\mathcal{G}_{n}^{(3)}-\left(9\textnormal{\textbf{e}}_{1}+12\textnormal{\textbf{e}}_{2}+21\textnormal{\textbf{e}}_{3}\right)\right].
$$
\end{theorem}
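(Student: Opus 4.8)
The plan is to reduce the quaternionic identity to the two scalar summation formulas (\ref{s1}) and (\ref{s2}) already proved in the Lemma, exploiting the fact that a sum of TJ3p generalized quaternions is computed coefficientwise in the fixed basis $\{1,\textbf{e}_{1},\textbf{e}_{2},\textbf{e}_{3}\}$. Writing out Definition \ref{d} and collecting like basis vectors, I would first record
\[
\sum_{l=0}^{n}J\mathcal{G}_{l}^{(3)}=\sum_{l=0}^{n}J_{l}^{(3)}+\Bigl(\sum_{l=0}^{n}J_{l+1}^{(3)}\Bigr)\textbf{e}_{1}+\Bigl(\sum_{l=0}^{n}J_{l+2}^{(3)}\Bigr)\textbf{e}_{2}+\Bigl(\sum_{l=0}^{n}J_{l+3}^{(3)}\Bigr)\textbf{e}_{3},
\]
so that every coefficient is a finite sum of consecutive third-order Jacobsthal numbers and only the scalar identity is needed.

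Next I would evaluate each shifted sum by re-indexing: for $k\in\{0,1,2,3\}$ one has $\sum_{l=0}^{n}J_{l+k}^{(3)}=\sum_{m=0}^{n+k}J_{m}^{(3)}-\sum_{m=0}^{k-1}J_{m}^{(3)}$, and applying (\ref{s1}) with $n$ replaced by $n+k$ turns the first piece into $\tfrac{1}{3}\bigl[J_{n+k+2}^{(3)}+2J_{n+k}^{(3)}-1\bigr]$, while the second piece is the explicit partial sum of the initial values $J_{0}^{(3)}=0$, $J_{1}^{(3)}=1$, $J_{2}^{(3)}=1$. Carrying this out for $k=0,1,2,3$ produces, for each basis vector, a main term of the shape $\tfrac13\bigl[J_{n+k+2}^{(3)}+2J_{n+k}^{(3)}\bigr]$ plus a rational constant; these main terms are exactly the coefficients of $\tfrac13\bigl[J\mathcal{G}_{n+2}^{(3)}+2J\mathcal{G}_{n}^{(3)}\bigr]$ when one re-reads them through Definition \ref{d}.

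It then remains to collect the leftover constants. The $-1$ inside (\ref{s1}) contributes $\tfrac13$ to every coefficient, and the subtracted initial partial sums contribute $J_{0}^{(3)}=0$, $J_{0}^{(3)}+J_{1}^{(3)}=1$, $J_{0}^{(3)}+J_{1}^{(3)}+J_{2}^{(3)}=2$ on $\textbf{e}_{1},\textbf{e}_{2},\textbf{e}_{3}$ respectively, so the total constant to be subtracted is $\tfrac13(1+\textbf{e}_{1}+4\textbf{e}_{2}+7\textbf{e}_{3})$, giving the first formula. The modified case is identical, using (\ref{s2}) (which has no $-1$ term) together with the initial values $K_{0}^{(3)}=3$, $K_{1}^{(3)}=1$, $K_{2}^{(3)}=3$ fixed by its Binet formula and generating function; here the boundary corrections $K_{0}^{(3)}=3$, $K_{0}^{(3)}+K_{1}^{(3)}=4$, $K_{0}^{(3)}+K_{1}^{(3)}+K_{2}^{(3)}=7$ land on $\textbf{e}_{1},\textbf{e}_{2},\textbf{e}_{3}$ and assemble into $\tfrac13(9\textbf{e}_{1}+12\textbf{e}_{2}+21\textbf{e}_{3})$. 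There is no conceptual difficulty in the multiplication rules (\ref{e1}), since no products of basis vectors occur; the one place to be careful — and the only genuine source of error — is the bookkeeping of the index shift, namely subtracting precisely the right initial partial sums and tracking the constant arising from the $-1$ in (\ref{s1}), which is exactly what distinguishes the $J$ and $K$ constant vectors.
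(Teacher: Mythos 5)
Your proposal is correct and follows essentially the same route as the paper's proof: expand the sum coefficientwise via Definition \ref{d}, apply the scalar formulas (\ref{s1}) and (\ref{s2}) to each shifted sum after re-indexing, and collect the boundary constants into $\frac{1}{3}\left(1+\textbf{e}_{1}+4\textbf{e}_{2}+7\textbf{e}_{3}\right)$ and $\frac{1}{3}\left(9\textbf{e}_{1}+12\textbf{e}_{2}+21\textbf{e}_{3}\right)$, exactly matching the paper's computation $3\sum_{l=0}^{n}J\mathcal{G}_{l}^{(3)}=J_{n+2}^{(3)}+2J_{n}^{(3)}-1+\left(J_{n+3}^{(3)}+2J_{n+1}^{(3)}-1\right)\textbf{e}_{1}+\left(J_{n+4}^{(3)}+2J_{n+2}^{(3)}-4\right)\textbf{e}_{2}+\left(J_{n+5}^{(3)}+2J_{n+3}^{(3)}-7\right)\textbf{e}_{3}$. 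You were also right to take $K_{0}^{(3)}=3$ as forced by the Binet formula, the generating function, and (\ref{s2}) itself, rather than the value $K_{0}^{(3)}=0$ misprinted in the paper's introduction; with $K_{0}^{(3)}=0$ the constant vector $9\textbf{e}_{1}+12\textbf{e}_{2}+21\textbf{e}_{3}$ would not come out.
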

\begin{proof}
Using Definition \ref{d} and Eq. (\ref{s1}), we get
\begin{align*}
3\sum_{l=0}^{n}J\mathcal{G}_{l}^{(3)}&=3\sum_{l=0}^{n}\left[J_{l}^{(3)}+J_{l+1}^{(3)}\textbf{e}_{1}+J_{l+2}^{(3)}\textbf{e}_{2}+J_{l+3}^{(3)}\textbf{e}_{3}\right]\\
&=3\sum_{l=0}^{n}J_{l}^{(3)}+3\left(\sum_{l=0}^{n}J_{l+1}^{(3)}\right)\textbf{e}_{1}+3\left(\sum_{l=0}^{n}J_{l+2}^{(3)}\right)\textbf{e}_{2}+3\left(\sum_{l=0}^{n}J_{l+3}^{(3)}\right)\textbf{e}_{3}\\
&=J_{n+2}^{(3)}+2J_{n}^{(3)}-1+\left(J_{n+3}^{(3)}+2J_{n+1}^{(3)}-1\right)\textbf{e}_{1}\\
&\ \ + \left(J_{n+4}^{(3)}+2J_{n+2}^{(3)}-4\right)\textbf{e}_{2}+\left(J_{n+5}^{(3)}+2J_{n+3}^{(3)}-7\right)\textbf{e}_{3}\\
&=J_{n+2}^{(3)}+J_{n+3}^{(3)}\textbf{e}_{1}+J_{n+4}^{(3)}\textbf{e}_{2}+J_{n+5}^{(3)}\textbf{e}_{3}\\
&\ \ + 2\left(J_{n}^{(3)}+J_{n+1}^{(3)}\textbf{e}_{1}+J_{n+2}^{(3)}\textbf{e}_{2}+J_{n+3}^{(3)}\textbf{e}_{3}\right)\\
&\ \ - \left(1+\textbf{e}_{1}+4\textbf{e}_{2}+7\textbf{e}_{3}\right)\\
&=J\mathcal{G}_{n+2}^{(3)}+2J\mathcal{G}_{n}^{(3)}-\left(1+\textbf{e}_{1}+4\textbf{e}_{2}+7\textbf{e}_{3}\right).
\end{align*}
The other identity is similar using Definition \ref{d} and Eq. (\ref{s2}).
\end{proof}

\begin{theorem}
Let $n\geq 0$ be an integer. Then
\begin{equation}\label{ne1}
J\mathcal{G}_{n+2}^{(3)}-4J\mathcal{G}_{n}^{(3)}=-\frac{1}{7}\left[X_{n}(5\textnormal{\textbf{A}}+\textnormal{\textbf{B}})+X_{n+1}(\textnormal{\textbf{A}}-4\textnormal{\textbf{B}})\right],
\end{equation}
\begin{equation}\label{ne2}
K\mathcal{G}_{n+2}^{(3)}-4K\mathcal{G}_{n}^{(3)}=-X_{n}(5\textnormal{\textbf{C}}+\textnormal{\textbf{D}})-X_{n+1}(\textnormal{\textbf{C}}-4\textnormal{\textbf{D}}).
\end{equation}
\end{theorem}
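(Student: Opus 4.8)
The plan is to derive both identities directly from the Binet formulas established in Theorem \ref{te3}, since those already express $J\mathcal{G}_n^{(3)}$ and $K\mathcal{G}_n^{(3)}$ as a clean combination of an exponential term with coefficient $\Theta$ and an $X$-term built from $\textbf{A},\textbf{B}$ (respectively $\textbf{C},\textbf{D}$). The entire computation then reduces to bookkeeping in the coefficients of $\Theta$, $\textbf{A}$, $\textbf{B}$ and the cyclic sequence $\{X_n\}$, so no new structural idea is needed.

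First I would substitute $n\mapsto n+2$ into the Binet formula to write $J\mathcal{G}_{n+2}^{(3)}=\tfrac{1}{7}[2^{n+3}\Theta+X_{n+2}\textbf{A}-X_{n+3}\textbf{B}]$, and then form the combination $J\mathcal{G}_{n+2}^{(3)}-4J\mathcal{G}_n^{(3)}$. The single observation that makes the statement clean is that the $\Theta$-coefficient is $2^{n+3}-4\cdot 2^{n+1}=8\cdot 2^n-8\cdot 2^n=0$, so the exponential part cancels completely and only the $X$-terms survive. This is also why the constant $4$ (and not some other multiple) is exactly what produces a closed form.

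Next I would simplify the surviving coefficients of $\textbf{A}$ and $\textbf{B}$ using the two standard facts about the cyclic sequence recorded earlier in the excerpt: the recurrence $X_{n+2}=-X_{n+1}-X_n$ and the periodicity $X_{n+3}=X_n$. These give $X_{n+2}-4X_n=-X_{n+1}-5X_n$ and $X_{n+3}-4X_{n+1}=X_n-4X_{n+1}$, and collecting terms yields precisely $-\tfrac{1}{7}[X_n(5\textbf{A}+\textbf{B})+X_{n+1}(\textbf{A}-4\textbf{B})]$, which is the first identity.

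Finally, the modified case is identical in structure: substituting into $K\mathcal{G}_n^{(3)}=2^n\Theta+X_n\textbf{C}-X_{n+1}\textbf{D}$, the $\Theta$-coefficient is now $2^{n+2}-4\cdot 2^n=0$, and the same two $X$-identities convert $(X_{n+2}-4X_n)\textbf{C}-(X_{n+3}-4X_{n+1})\textbf{D}$ into $-X_n(5\textbf{C}+\textbf{D})-X_{n+1}(\textbf{C}-4\textbf{D})$. I expect no genuine obstacle: once the $\Theta$-terms are seen to vanish, the remainder is a purely mechanical reindexing of the cyclic sequence, and the multiplication relations \eqref{e1} are not even needed here since no products of the basis quaternions arise.
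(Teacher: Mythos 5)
Your proposal is correct and follows essentially the same route as the paper's own proof: both substitute the Binet formulas of Theorem \ref{te3}, observe that the $\Theta$-terms cancel because $2^{n+3}-4\cdot 2^{n+1}=0$ (respectively $2^{n+2}-4\cdot 2^{n}=0$), and then reduce the remaining coefficients using $X_{n+2}=-X_{n+1}-X_{n}$ and $X_{n+3}=X_{n}$ before collecting terms. There is nothing to add; your reindexing computations match the paper's line for line.
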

\begin{proof}
By the Definition \ref{d} and Theorem \ref{te3}, we get
\begin{align*}
J\mathcal{G}_{n+2}^{(3)}-4J\mathcal{G}_{n}^{(3)}&=\frac{1}{7}\left[2^{n+3}\Theta+X_{n+2}\textnormal{\textbf{A}}-X_{n+3}\textnormal{\textbf{B}}\right]\\
&\ \ - \frac{4}{7}\left[2^{n+1}\Theta+X_{n}\textnormal{\textbf{A}}-X_{n+1}\textnormal{\textbf{B}}\right]\\
&=\frac{1}{7}\left[X_{n+2}\textnormal{\textbf{A}}-X_{n+3}\textnormal{\textbf{B}}-4X_{n}\textnormal{\textbf{A}}+4X_{n+1}\textnormal{\textbf{B}}\right]\\
&=\frac{1}{7}\left[-X_{n+1}\textnormal{\textbf{A}}-X_{n}\textnormal{\textbf{A}}-X_{n}\textnormal{\textbf{B}}-4X_{n}\textnormal{\textbf{A}}+4X_{n+1}\textnormal{\textbf{B}}\right]\\
&=-\frac{1}{7}\left[X_{n}(5\textnormal{\textbf{A}}+\textnormal{\textbf{B}})+X_{n+1}(\textnormal{\textbf{A}}-4\textnormal{\textbf{B}})\right],
\end{align*}
which completes the proof of (\ref{ne1}). The equation (\ref{ne2}) is similar using Theorem \ref{te3}.
\end{proof}

In the same manner we can prove next results.
\begin{theorem}
Let $n\geq 0$ be an integer. Then
\begin{equation}
7J\mathcal{G}_{n}^{(3)}+K\mathcal{G}_{n}^{(3)}=3\cdot 2^{n}\Theta+X_{n}(\textnormal{\textbf{A}}+\textnormal{\textbf{C}})-X_{n+1}(\textnormal{\textbf{B}}+\textnormal{\textbf{D}}),
\end{equation}
\begin{equation}
7J\mathcal{G}_{n}^{(3)}-K\mathcal{G}_{n}^{(3)}=2^{n}\Theta+X_{n}(\textnormal{\textbf{A}}-\textnormal{\textbf{C}})-X_{n+1}(\textnormal{\textbf{B}}-\textnormal{\textbf{D}}),
\end{equation}
where $\Theta=1+2\textnormal{\textbf{e}}_{1}+4\textnormal{\textbf{e}}_{2}+8\textnormal{\textbf{e}}_{3}$.
\end{theorem}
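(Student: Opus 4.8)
The plan is to read both identities straight off the Binet formulas in Theorem \ref{te3}. Clearing the factor $\tfrac{1}{7}$ from the TJ3p formula, those formulas say
$$
7J\mathcal{G}_{n}^{(3)}=2^{n+1}\Theta+X_{n}\textbf{A}-X_{n+1}\textbf{B}
$$
and
$$
K\mathcal{G}_{n}^{(3)}=2^{n}\Theta+X_{n}\textbf{C}-X_{n+1}\textbf{D}.
$$
These two expressions are the only ingredients; each claim is obtained by adding or subtracting them and then grouping the $\Theta$-term, the $X_{n}$-term, and the $X_{n+1}$-term separately.

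First I would add the two lines to obtain the first identity. The $\Theta$-coefficients combine as $2^{n+1}+2^{n}=3\cdot 2^{n}$, the $X_{n}$-parts bundle into $X_{n}(\textbf{A}+\textbf{C})$, and the $X_{n+1}$-parts into $-X_{n+1}(\textbf{B}+\textbf{D})$, giving exactly $3\cdot 2^{n}\Theta+X_{n}(\textbf{A}+\textbf{C})-X_{n+1}(\textbf{B}+\textbf{D})$. For the second identity I would subtract instead: now the $\Theta$-coefficients give $2^{n+1}-2^{n}=2^{n}$, while the letter pairs combine as the differences $(\textbf{A}-\textbf{C})$ and $(\textbf{B}-\textbf{D})$, yielding $2^{n}\Theta+X_{n}(\textbf{A}-\textbf{C})-X_{n+1}(\textbf{B}-\textbf{D})$.

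Because $\Theta$, $\textbf{A}$, $\textbf{B}$, $\textbf{C}$, $\textbf{D}$ are fixed 3PGQs not depending on $n$, the whole computation takes place in the additive (abelian) structure of the algebra: no noncommutative products occur, so none of the multiplication rules in Eq. (\ref{e1}) nor the product relations in Eqs. (\ref{m1})--(\ref{m3}) are needed. Consequently there is no real obstacle; the single point to watch is the elementary identity $2^{n+1}=2\cdot 2^{n}$, which is precisely what turns the mismatched powers in the two Binet formulas into the clean coefficients $3\cdot 2^{n}$ and $2^{n}$. The argument is thus a one-line linear combination of the formulas from Theorem \ref{te3}.
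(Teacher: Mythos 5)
Your proposal is correct and matches the paper's intended argument: the paper states this result with the remark that it follows ``in the same manner'' as the preceding theorem, i.e.\ by a direct linear combination of the Binet formulas $7J\mathcal{G}_{n}^{(3)}=2^{n+1}\Theta+X_{n}\textnormal{\textbf{A}}-X_{n+1}\textnormal{\textbf{B}}$ and $K\mathcal{G}_{n}^{(3)}=2^{n}\Theta+X_{n}\textnormal{\textbf{C}}-X_{n+1}\textnormal{\textbf{D}}$ from Theorem \ref{te3}, which is exactly what you do. Your observation that only the additive structure is used, so the noncommutative relations in Eqs.\ (\ref{e1}) and (\ref{m1})--(\ref{m3}) play no role, is a correct and worthwhile clarification.
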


\section{Conclusions}
This study presents the 3-parameter generalized quaternions, the consecutive coefficient third-order Jacobsthal and modified third-order Jacobsthal numbers. We obtained these new numbers not defined in the literature before. We have given a comprehensive introductory study of 3-parameter generalized quaternions as a guide. Since this study includes some new results, it contributes to literature by providing essential information concerning these new numbers. Therefore, we hope that these new numbers and properties that we have found will offer a new perspective to the researches. As a future direction, we plan to study other identities of  generalized tribonacci 3-parameter generalized quaternion by increasing the diversity of these number sequences.

\medskip

\end{document}